\begin{document}
\bibliographystyle{plain}

\newfont{\teneufm}{eufm10}
\newfont{\seveneufm}{eufm7}
\newfont{\fiveeufm}{eufm5}
%
%
\newfam\eufmfam
               \textfont\eufmfam=\teneufm \scriptfont\eufmfam=\seveneufm
               \scriptscriptfont\eufmfam=\fiveeufm
%
%
\def\frak#1{{\fam\eufmfam\relax#1}}
%


\def\bbbr{{\rm I\!R}} 
\def\bbbm{{\rm I\!M}}
\def\bbbn{{\rm I\!N}} 
\def\bbbf{{\rm I\!F}}
\def\bbbh{{\rm I\!H}}
\def\bbbk{{\rm I\!K}}
\def\bbbp{{\rm I\!P}}
\def\bbbone{{\mathchoice {\rm 1\mskip-4mu l} {\rm 1\mskip-4mu l}
{\rm 1\mskip-4.5mu l} {\rm 1\mskip-5mu l}}}
\def\bbbc{{\mathchoice {\setbox0=\hbox{$\displaystyle\rm C$}\hbox{\hbox
to0pt{\kern0.4\wd0\vrule height0.9\ht0\hss}\box0}}
{\setbox0=\hbox{$\textstyle\rm C$}\hbox{\hbox
to0pt{\kern0.4\wd0\vrule height0.9\ht0\hss}\box0}}
{\setbox0=\hbox{$\scriptstyle\rm C$}\hbox{\hbox
to0pt{\kern0.4\wd0\vrule height0.9\ht0\hss}\box0}}
{\setbox0=\hbox{$\scriptscriptstyle\rm C$}\hbox{\hbox
to0pt{\kern0.4\wd0\vrule height0.9\ht0\hss}\box0}}}}
\def\bbbq{{\mathchoice {\setbox0=\hbox{$\displaystyle\rm
Q$}\hbox{\raise
0.15\ht0\hbox to0pt{\kern0.4\wd0\vrule height0.8\ht0\hss}\box0}}
{\setbox0=\hbox{$\textstyle\rm Q$}\hbox{\raise
0.15\ht0\hbox to0pt{\kern0.4\wd0\vrule height0.8\ht0\hss}\box0}}
{\setbox0=\hbox{$\scriptstyle\rm Q$}\hbox{\raise
0.15\ht0\hbox to0pt{\kern0.4\wd0\vrule height0.7\ht0\hss}\box0}}
{\setbox0=\hbox{$\scriptscriptstyle\rm Q$}\hbox{\raise
0.15\ht0\hbox to0pt{\kern0.4\wd0\vrule height0.7\ht0\hss}\box0}}}}
\def\bbbt{{\mathchoice {\setbox0=\hbox{$\displaystyle\rm
T$}\hbox{\hbox to0pt{\kern0.3\wd0\vrule height0.9\ht0\hss}\box0}}
{\setbox0=\hbox{$\textstyle\rm T$}\hbox{\hbox
to0pt{\kern0.3\wd0\vrule height0.9\ht0\hss}\box0}}
{\setbox0=\hbox{$\scriptstyle\rm T$}\hbox{\hbox
to0pt{\kern0.3\wd0\vrule height0.9\ht0\hss}\box0}}
{\setbox0=\hbox{$\scriptscriptstyle\rm T$}\hbox{\hbox
to0pt{\kern0.3\wd0\vrule height0.9\ht0\hss}\box0}}}}
\def\bbbs{{\mathchoice
{\setbox0=\hbox{$\displaystyle     \rm S$}\hbox{\raise0.5\ht0\hbox
to0pt{\kern0.35\wd0\vrule height0.45\ht0\hss}\hbox
to0pt{\kern0.55\wd0\vrule height0.5\ht0\hss}\box0}}
{\setbox0=\hbox{$\textstyle        \rm S$}\hbox{\raise0.5\ht0\hbox
to0pt{\kern0.35\wd0\vrule height0.45\ht0\hss}\hbox
to0pt{\kern0.55\wd0\vrule height0.5\ht0\hss}\box0}}
{\setbox0=\hbox{$\scriptstyle      \rm S$}\hbox{\raise0.5\ht0\hbox
to0pt{\kern0.35\wd0\vrule height0.45\ht0\hss}\raise0.05\ht0\hbox
to0pt{\kern0.5\wd0\vrule height0.45\ht0\hss}\box0}}
{\setbox0=\hbox{$\scriptscriptstyle\rm S$}\hbox{\raise0.5\ht0\hbox
to0pt{\kern0.4\wd0\vrule height0.45\ht0\hss}\raise0.05\ht0\hbox
to0pt{\kern0.55\wd0\vrule height0.45\ht0\hss}\box0}}}}
\def\bbbz{{\mathchoice {\hbox{$\sf\textstyle Z\kern-0.4em Z$}}
{\hbox{$\sf\textstyle Z\kern-0.4em Z$}}
{\hbox{$\sf\scriptstyle Z\kern-0.3em Z$}}
{\hbox{$\sf\scriptscriptstyle Z\kern-0.2em Z$}}}}
\def\ts{\thinspace}

\newtheorem{theorem}{Theorem}
\newtheorem{lemma}[theorem]{Lemma}
\newtheorem{claim}[theorem]{Claim}
\newtheorem{cor}[theorem]{Corollary}
\newtheorem{prop}[theorem]{Proposition}
\newtheorem{definition}[theorem]{Definition}
\newtheorem{remark}[theorem]{Remark}
\newtheorem{question}[theorem]{Open Question}

\def\qed{\ifmmode
\squareforqed\else{\unskip\nobreak\hfil
\penalty50\hskip1em\null\nobreak\hfil\squareforqed
\parfillskip=0pt\finalhyphendemerits=0\endgraf}\fi}

\def\squareforqed{\hbox{\rlap{$\sqcap$}$\sqcup$}}

\def\cA{{\mathcal A}}
\def\cB{{\mathcal B}}
\def\cC{{\mathcal C}}
\def\cD{{\mathcal D}}
\def\cE{{\mathcal E}}
\def\cF{{\mathcal F}}
\def\cG{{\mathcal G}}
\def\cH{{\mathcal H}}
\def\cI{{\mathcal I}}
\def\cJ{{\mathcal J}}
\def\cK{{\mathcal K}}
\def\cL{{\mathcal L}}
\def\cM{{\mathcal M}}
\def\cN{{\mathcal N}}
\def\cO{{\mathcal O}}
\def\cP{{\mathcal P}}
\def\cQ{{\mathcal Q}}
\def\cR{{\mathcal R}}
\def\cS{{\mathcal S}}
\def\cT{{\mathcal T}}
\def\cU{{\mathcal U}}
\def\cV{{\mathcal V}}
\def\cW{{\mathcal W}}
\def\cX{{\mathcal X}}
\def\cY{{\mathcal Y}}
\def\cZ{{\mathcal Z}}
\newcommand{\rmod}[1]{\: \mbox{mod}\: #1}

\def\tcN{\cN^\mathbf{c}}

\def\Tr{{\mathrm{Tr}}}

\def\mand{\qquad \mbox{and} \qquad}
\renewcommand{\vec}[1]{\mathbf{#1}}

\def\eqref#1{(\ref{#1})}


\newcommand{\ignore}[1]{}

\hyphenation{re-pub-lished}

\parskip 1.5 mm
\def\lln{{\mathrm Lnln}}
\def\Res{\mathrm{Res}\,}

\def\F{{\bbbf}}
\def\Fp{\F_p}
\def\fp{\Fp^*}
\def\Fq{\F_q}
\def\ff{\F_2}
\def\ffn{\F_{2^n}}

\def\K{{\bbbk}}
\def \Z{{\bbbz}}
\def \N{{\bbbn}}
\def\Q{{\bbbq}}
\def \R{{\bbbr}}

\def\Zm{\Z_m}
\def \Um{{\mathcal U}_m}

\def \Bf{\frak B}

\def\Km{\cK_\mu}

\def\va {{\mathbf a}}
\def \vb {{\mathbf b}}
\def \vc {{\mathbf c}}
\def\vx{{\mathbf x}}
\def \vr {{\mathbf r}}
\def \vv {{\mathbf v}}
\def\vu{{\mathbf u}}
\def \vw{{\mathbf w}}
\def \vz {{\mathbfz}}

\def\\{\cr}
\def\({\left(}
\def\){\right)}
\def\fl#1{\left\lfloor#1\right\rfloor}
\def\rf#1{\left\lceil#1\right\rceil}

\def\flq#1{{\left\lfloor#1\right\rfloor}_q}
\def\flp#1{{\left\lfloor#1\right\rfloor}_p}
\def\flm#1{{\left\lfloor#1\right\rfloor}_m}

\def\Al{{\sl Alice}}
\def\Bob{{\sl Bob}}

\def\Or{{\mathcal O}}

\def\inv#1{\mbox{\rm{inv}}\,#1}
\def\invM#1{\mbox{\rm{inv}}_M\,#1}
\def\invp#1{\mbox{\rm{inv}}_p\,#1}

\def\Ln#1{\mbox{\rm{Ln}}\,#1}

\def \nd {\,|\hspace{-1.2mm}/\,}

\def\ord{\mu}

\def\E{\mathbf{E}}

\def\Cl{{\mathrm {Cl}}}

\def\epp{\mbox{\bf{e}}_{p-1}}
\def\ep{\mbox{\bf{e}}_p}
\def\eq{\mbox{\bf{e}}_q}

\def\bm{\bf{m}}

\newcommand{\floor}[1]{\lfloor {#1} \rfloor}

\newcommand{\comm}[1]{\marginpar{%
\vskip-\baselineskip 
\raggedright\footnotesize
\itshape\hrule\smallskip#1\par\smallskip\hrule}}

\def\rem{{\mathrm{\,rem\,}}}
\def\dist {{\mathrm{\,dist\,}}}
\def\etal{{\it et al.}}
\def\ie{{\it i.e. }}
\def\veps{{\varepsilon}}
\def\eps{{\eta}}

\def\ind#1{{\mathrm {ind}}\,#1}
                \def \MSB{{\mathrm{MSB}}}
\newcommand{\abs}[1]{\left| #1 \right|}

\newcommand{\set}[1]{\left\{#1\right\}}
\def\tf {\widetilde{f}}
\def\tg {\widetilde{g}}
\def\th {\widetilde{h}}


\title{Pseudorandom Bits From
Points on Elliptic Curves}

\author{
          {\sc{Reza Rezaeian Farashahi and Igor E.~Shparlinski}} \\
          {Department of Computing}\\
          {Macquarie University} \\
          {Sydney, NSW 2109, Australia} \\
          {\tt \{reza,igor\}@ics.mq.edu.au}
          }

\maketitle

\begin{abstract}
Let $\E$ be an elliptic curve over a finite field $\F_{q}$ of
$q$ elements, with $\gcd(q,6)=1$, given by an affine Weierstra\ss\ equation.
We also use $x(P)$ to denote the $x$-component of a point $P =
(x(P),y(P))\in \E$.
We  estimate character sums of the form
$$
\sum_{n=1}^N  \chi\(x(nP)x(nQ)\) \quad \text{and}\quad
\sum_{n_1, \ldots, n_k=1}^N \psi\(\sum_{j=1}^k c_j  x\(\(\prod_{i =1}^j n_i\) R\)\)
$$
on average over all $\F_q$ rational points  $P$, $Q$ and $R$
on $\E$, where $\chi$ is a quadratic character, $\psi$ is a nontrivial
additive character in $\F_q$ and $(c_1, \ldots, c_k)\in \F_q^k$ is a
non-zero vector.  These bounds confirm several
recent conjectures of D.~Jao, D.~Jetchev and R.~Venkatesan,
related to extracting random bits from various sequences of points
on elliptic curves.
\end{abstract}

\paragraph{Keywords:} Elliptic curves, pseudorandom bits, character sums

\section{Introduction}

\subsection{Motivation}

Many standard pseudorandom number generators based
on finite fields and residue rings have proved to be
insecure, see~\cite{BGGS1,BGGS2,BGGS3,Boy1,Boy2,ContShp,
FHKLS,vzGShp,GoGuIb,JS,Kraw,Lag}. Partially motivated
by this and partially because this is of intrinsic interest
for elliptic curve cryptography, several constructions of
pseudorandom generators from elliptic curves have been proposed,
see~\cite{Shp} for a survey of such constructions
and results.

Several new pseudorandom generators from elliptic curves 
 have recently been suggested
by D.~Jao, D.~Jetchev and R.~Ven\-katesan~\cite{JJV}.
Giving a rigorous analysis of these constructions is
the primal goal of this paper. We also show how one
of the most powerful number theoretic techniques,
exponential and character sums, can be used to address
these and similar questions, which can be of independent
interest.

Finally, we note that although elliptic curves provide a very promising
source of cryptographically secure bits, as the recent result of~\cite{GuIb}
shows, they also  have to be used with great care.

\subsection{Results}

We fix a finite field $\F_{q}$ of
$q$ elements and  an elliptic curve $\E$ over $\F_{q}$
given by an affine Weierstra\ss\ equation
\begin{equation}
\label{eq:Weier}
\E:\quad   Y^2 = X^3  + aX + b
\end{equation}
with some $a,b \in \F_{q}$, see~\cite{Silv}.

We recall that the set of all points on $\E$ forms an Abelian group,
with the point at infinity $\cO$ as the neutral
element,  and
we use $\oplus $ to denote the group operation.
As usual we write every point $P \ne \cO$ on $\E$ as $P = (x(P), y(P))$.
For $P=\cO$ we formally write $P=(0, \infty)$.

Let $\E(\F_{q})$   denote the set of $\F_q$-rational
points on  $\E$.

For a positive integer $N$,  points
$P, Q, R \in \E(\F_{q})$,
and a non-zero vector
$\vec{c} = (c_1, \ldots, c_k)\in \F_q^k$,
define character sums of the form
\begin{eqnarray*}
S(P,Q;N) &= &\sum_{n=1}^N  \chi\(x(nP)x(nQ)\), \\
T_k(\vec{c},R;N) &= & \sum_{n_1, \ldots, n_k=1}^N \psi\(\sum_{j=1}^k
c_j x\(\(\prod_{i =1}^j n_i\) R\)\),
\end{eqnarray*}
where $\chi$ is a quadratic character
(we also put $\chi(0) = 0$) and
$\psi$ is a nontrivial additive
characters in~$\F_q$.

D.~Jao, D.~Jetchev and R.~Venkatesan~\cite[Conjecture~4.1]{JJV}
have conjectured
that there exists a positive constant $\delta>0$ such that
for any  $N \ge (\log q)^2$ and any points $P \ne Q$ the bound
$$
S(P,Q;N) = O(N^{1-\delta})
$$
holds.
Towards this conjecture, it has been shown in~\cite[Section~4.2]{JJV}
that for any point $Q\in \E(\F_{q})$,
$$
\sum_{P\in \E(\F_{q})} S(P,Q;N) = O(qN^{1/2}).
$$
This however does not imply that the sums $S(P,Q;N)$ are typically,
or even sometimes, small. Furthermore, the proof given in~\cite{JJV}
seems to hold only if the cardinality
$\# \E(\F_{q})$ is not divisible by any prime $\ell \le N$.
Here we use a different argument and estimate
the sum
$$
U(N) = \sum_{P,Q\in \E(\F_{q})} |S(P,Q;N)|^2,
$$
which immediately implies that the sums $S(P,Q;N)$
are small for almost all pairs of points $P,Q\in \E(\F_{q})$.

We also estimate the average value of the sums $T_k(\vec{c},R;N)$ over points
of subgroups $\cH \subseteq \E(\F_{q})$
of order $t$ which  is not divisible by any prime $\ell \le N$.
Namely for a subgroup $\cH$ of the group of points $\E(\F_{q})$,
we estimate the sum
$$
V_k(\vec{c},\cH; N) = \sum_{R\in \cH} |T_k(\vec{c},R;N)|^2
$$
which similarly  implies
that the sums $T_k(\vec{c},R;N)$
are small for almost all  points $R\in \cH$.
Note that subgroups of cryptographic interest are usually chosen
to be of a prime order, so the   coprimality condition
$\gcd(N!, \# \cH)=1$ is always satisfied.

In turn, in the case of prime $q=p$, we derive
from our bound on $V_k(\vec{c},\cH; N)$  that for almost all   points $R\in
\E(\F_{q})$,
strings of $\ell$ least significant bits of each components
of the  $k$-dimensinal points
\begin{eqnarray}
\label{eq:vect}
\left\{x\(\(\prod_{i =1}^j n_i\) R\)\right\}_{j=1}^k,\qquad n_1, \ldots, n_k \in \{1, \ldots, N\},
\end{eqnarray}
are uniformly distributed (provided that $\#\cH$ is large enough).
We note that instead of strings of most significant bits (as
suggested in~\cite{JJV})
we use least significant bits. This is because for some primes $p$
(for those which are very close to a power of $2$)
most significant bits of random residues modulo $p$ are biased,
while
least significant bits are always uniformly distributed.
A step towards such a result is made in~\cite[Proposition~5.1]{JJV}
but it contains some parameters which are not explicitly
estimated in~\cite{JJV} (and as we have just mentioned it
cannot work for most significant bits anyway).

Throughout the paper, the implied constants in symbols `$O$' and
`$\ll$'  are absolute  (we recall that $U\ll V$
and $U = O(V)$ are both equivalent to the inequality $|U|\le cV$ with some
constant $c> 0$).

\bigskip

\noindent{\bf Acknowledgements.}
This work was supported in part by ARC Grant DP0881473, Australia,
(for R.R.F. and I.S.)
and by NRF Grant~CRP2-2007-03, Singapore, (for I.S).

\section{Preparations}

\subsection{Backgrounds on division polynomials}

For an integer $n\ge 0$, let $\psi_n(X,Y)$ be the $n$th {\it division
polynomial\/} of $\E$ over $\F_{q}$ given by~\eqref{eq:Weier}, we
refer to~\cite{Silv} for a
background on  division polynomials.
Let
\begin{equation}\label{eq: def fn}
f_n=X\psi^2_n-\psi_{n-1}\psi_{n+1} \mand  g_n=\psi_n^2, \qquad n =1,2, \ldots.
\end{equation}
In particular, $f_n$ and $g_n$ are polynomials in $\F_{q}[X]$
of degrees
\begin{equation}
\label{eq:Deg}
\begin{array}{ll}
\deg f_n = n^2 \mand
\deg g_n \le n^2-1,\\
\end{array}
\end{equation}
such that
\begin{equation}
\label{eq:xfg}
x(nP) = \frac{ f_n(x(P))}{g_n(x(P))}.
\end{equation}
Further, one can write
\begin{equation}\label{eq:gn}
g_n(X) =
\left\{
\begin{array}{ll}
h^2_n(X), & \text{ if $n$ is odd},\\
(X^3+aX+b)h^2_n(X), & \text{ if $n$ is even},\\
\end{array}
\right.
\end{equation}
for some polynomials $h_n(X)$ in $\F_{q}[X]$, $n =1,2, \ldots$.

It is well known, and  also follows from~\eqref{eq:xfg},
that the roots of the polynomial $g_n$, for $n \ge
2$, are the $x$-coordinates of $n$-torsion points of $\E$, that is,
for all points $P$ in $\E(\overline{\F}_q)$ with $P\ne \cO$, we have
$$P=(x,y) \in \E[n] \Longleftrightarrow g_n(x)=0,$$
where, as usual,
$$
\E[n]=\set{P~:~P\in \E(\overline{\F}_q),\ nP=\cO}.
$$
and $\overline{\F}_q$ denotes the algebraic closure of $\F_q$.

We note that, if $\gcd(n,q)=1$, then
$$\E[n] \cong \Z/n\Z \times \Z/n\Z.$$
Moreover, if $\F_q$ is of characteristic $p$,
then $\E[p]$ is isomorphic to $\Z/p\Z$ or $\set{\cO}$.
We recall that an
elliptic curve $\E$ is called ordinary if $\E[p]\cong \Z/p\Z$. It is
called supersingular if $\E[p] \cong \set{\cO}$. 
Furthermore, if $p$ divides $n$, write $n=p^rn_*$ with $\gcd(p,n_*)=1$. Then
\begin{equation*}
\E[n]=\E[n_*] \oplus  \E[p^r],
\end{equation*}
where $\E[p^r] \cong \Z/p^r\Z$ if $\E$ 
is ordinary and $\E[p^r] \cong \set{\cO}$ if $\E$ is supersingular. In particular, $\#\E[n]=nn_*$ if $\E$ is ordinary and $\#\E[n]=n_*^2$ if $\E$ is supersingular.

Denote the set of $n$-division points of a point $Q$ in $\E$ by
$\E[n,Q]$, that is,
$$\E[n,Q]=\set{P~:~P\in \E(\overline{\F}_q), nP=Q}.
$$
Clearly, $nP=Q$ if and only if $\E[n,Q]=P \oplus \E[n]$.

The following result shows that the roots of $f_n$ are the $x$-coordinates of $n$-division points of a point $P_0$ on $\E$ with $x(P_0)=0$.

\begin{lemma}
\label{lem:E-fn}
Let $\E$ be an elliptic curve over $\F_q$ given by the 
equation~\eqref{eq:Weier}. Let $P_0=(0,c)\in \E(\overline{\F}_q) $, 
where $c$ is a square root of $b$. Then, for 
all $x\in \overline{\F}_q$, we have $f_n(x)=0$ 
if and only if there exist a point 
$P \in \E[n,P_0]$ with $x(P)=x$.
\end{lemma}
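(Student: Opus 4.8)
The plan is to read everything off the rational-function identity~\eqref{eq:xfg}, namely $x(nP)=f_n(x(P))/g_n(x(P))$, combined with the fact recorded above that for $m\ge2$ the roots of $g_m$ are exactly the $x$-coordinates of the nonzero points of $\E[m]$. One may assume $n\ge1$; note that $P_0=(0,c)$ does lie on $\E(\overline{\F}_q)$, since $c^2=b=0^3+a\cdot0+b$, and that $P_0\ne\cO$. The two implications are treated separately.

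Suppose first that $P\in\E[n,P_0]$, and set $x=x(P)$. Since $nP=P_0\ne\cO$, neither $P$ nor $nP$ equals $\cO$ and $P\notin\E[n]$, so $x(P)$ is defined and, by the description of the roots of $g_n$, $g_n(x(P))\ne0$. Then~\eqref{eq:xfg} gives $0=x(P_0)=x(nP)=f_n(x(P))/g_n(x(P))$, whence $f_n(x)=0$.

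Conversely, suppose $f_n(x)=0$. The only step requiring a real argument is the claim that then $g_n(x)\ne0$. If $g_n(x)=0$ then $n\ge2$ (since $g_1\equiv1$), and from~\eqref{eq: def fn} one has $\psi_{n-1}\psi_{n+1}=Xg_n-f_n$ as polynomials in $X$, so evaluating at $x$ gives $\psi_{n-1}(x,y)\psi_{n+1}(x,y)=0$ for any point $(x,y)\in\E(\overline{\F}_q)$ with that $x$-coordinate, hence $g_{n-1}(x)=0$ or $g_{n+1}(x)=0$ (as $g_m=\psi_m^2$). But $g_n$ and $g_{n\pm1}$ have no common root, since such a root would be the $x$-coordinate of a nonzero point of $\E[n]\cap\E[n\pm1]=\E[\gcd(n,n\pm1)]=\E[1]=\{\cO\}$, which is impossible. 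So $g_n(x)\ne0$. Now pick $y\in\overline{\F}_q$ with $y^2=x^3+ax+b$ and put $P=(x,y)\in\E(\overline{\F}_q)$; since $g_n(x)\ne0$ and $P\ne\cO$ we have $P\notin\E[n]$, so $nP\ne\cO$, and~\eqref{eq:xfg} gives $x(nP)=f_n(x)/g_n(x)=0$. The only points of $\E$ with $x$-coordinate $0$ are $(0,c)=P_0$ and $(0,-c)=-P_0$, so $nP=P_0$ or $nP=-P_0$; in the former case $P$, and in the latter $-P$, is a point of $\E[n,P_0]$ whose $x$-coordinate is $x$.

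The only genuine obstacle is thus the coprimality statement that $f_n$ and $g_n$ have no common root over $\overline{\F}_q$; it follows from the torsion description of the roots of the $g_m$ together with the elementary identity $\E[k]\cap\E[\ell]=\E[\gcd(k,\ell)]$, and everything else is formal bookkeeping with~\eqref{eq:xfg}.
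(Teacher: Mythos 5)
Your proof is correct and follows essentially the same route as the paper's: use~\eqref{eq:xfg} to translate $f_n(x)=0$ into $x(nP)=0$, hence $nP=\pm P_0$, and conversely. The only difference is that you actually justify the step ``$f_n(x)=0\Rightarrow g_n(x)\ne0$'' (via $\psi_{n-1}\psi_{n+1}=Xg_n-f_n$ and $\E[n]\cap\E[n\pm1]=\{\cO\}$), which the paper asserts without proof; that is a worthwhile addition, not a deviation.
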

 
\begin{proof}
Let $x\in \overline{\F}_q$. Then, there exists an element 
$y\in \overline{\F}_q$ such that the point 
$P=(x,y)$ is a point on $\E$. If $f_n(x)=0$, then $g_n(x)\ne 0$. 
Moreover, from~\eqref{eq:xfg}, we have $x(nP)=0$. 
So, $nP=P_0$ or $nP=-P_0$. Thus, $nP=P_0$ or $n(-P)=P_0$, 
that is, either $P=(x,y)$ or $-P=(x,-y)$ is a point of $\E[n,P_0]$.

If $P=(x,y)\in \E[n,P_0]$, then $nP=P_0$. So, $x(nP)=x(P_0)=0$. 
 Next, from~\eqref{eq:xfg}, we have $f_n(x)=0$.
\end{proof}

\begin{lemma}\label{lem:fn}
For all positive integers $n=p^rn_*$ with $\gcd(n_*,p)=1$, we have
$$
f_n(X) =
\left\{
\begin{array}{ll}
\widetilde{f}_n(X)^{p^r}, & \text{ if $\E$ is ordinary},\vspace{2mm}\\
\widetilde{f}_n(X)^{p^{2r}}, & \text{ if $\E$ is supersingular},\\
\end{array}
\right.
$$
for some polynomial $\widetilde{f}_n$ in $\F_q[X]$ with $\deg
\widetilde{f}_n=\#\E[n]$.
\end{lemma}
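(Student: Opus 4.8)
The plan is to realise $f_n/g_n$ as the rational map induced by $[n]$ on the $x$-line and to read off its inseparability from the factorisation $[n]=[n_*]\circ[p^r]$. First I would record that $\gcd(f_n,g_n)=1$ in $\F_q[X]$: by Lemma~\ref{lem:E-fn} every root of $f_n$ is the $x$-coordinate of a point $P$ with $nP=P_0\ne\cO$, every root of $g_n$ is the $x$-coordinate of a non-zero point of $\E[n]$, and $nP=P_0$ together with $nP'=\cO$ forces $P\ne\pm P'$, so these two root sets are disjoint. By~\eqref{eq:xfg} the rational function $\varphi_n=f_n/g_n$ satisfies $\varphi_n(x(P))=x(nP)$, hence represents the self-map of $\mathbb{P}^1$ induced by $[n]$ (well defined since $[n]$ commutes with $P\mapsto-P$); being in lowest terms, it has degree $\max(\deg f_n,\deg g_n)=n^2$ by~\eqref{eq:Deg}.

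Next I would compute its inseparable degree. The cover $x:\E\to\mathbb{P}^1$ is separable as the characteristic is odd, so from $x\circ[n]=\varphi_n\circ x$ and multiplicativity of separable degree one gets $\deg_s\varphi_n=\deg_s[n]$; since $\deg\varphi_n=n^2=\deg[n]$, also $\deg_i\varphi_n=\deg_i[n]$. Now $\gcd(n_*,p)=1$ makes $[n_*]$ separable, so $\deg_i[n]=\deg_i[p^r]=(\deg_i[p])^r$; and from $[p]=\widehat{\pi}\circ\pi$, with $\pi$ the purely inseparable $p$-power Frobenius isogeny of degree $p$ and its dual $\widehat{\pi}$ separable exactly when $\E$ is ordinary, one obtains $\deg_i[p]=p$ if $\E$ is ordinary and $\deg_i[p]=p^2$ if $\E$ is supersingular. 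Thus $\deg_i\varphi_n=p^r$ or $p^{2r}$; write $p^s$ for this value.

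It remains to turn this into a $p^s$-th power. A rational self-map of $\mathbb{P}^1$ over the perfect field $\F_q$ of inseparable degree $p^s$ factors through $X\mapsto X^{p^s}$, so $\varphi_n=A(X^{p^s})/B(X^{p^s})$ with $\gcd(A,B)=1$; perfectness of $\F_q$ gives $A(X^{p^s})=\widetilde A(X)^{p^s}$ and $B(X^{p^s})=\widetilde B(X)^{p^s}$ with coprime $\widetilde A,\widetilde B\in\F_q[X]$, and comparison with the reduced form of $\varphi_n$ yields $f_n=c\,\widetilde A^{\,p^s}$ for some $c\in\F_q^\times$. Since $f_n$ is monic, $c=1$ and $\widetilde f_n:=\widetilde A$ is the monic $p^s$-th root of $f_n$; it lies in $\F_q[X]$ because $\F_q$ is perfect, so that extracting $p^s$-th roots of the coefficients of $f_n$ stays in $\F_q$. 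Finally $\deg\widetilde f_n=n^2/p^s$, which is $nn_*$ when $\E$ is ordinary and $n_*^2$ when $\E$ is supersingular, i.e.\ $\#\E[n]$ in both cases.

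The main obstacle is the middle step: identifying $\deg_i[p]$ and, above all, transferring inseparability from the isogeny $[n]$ to the induced map $\varphi_n$ on $x$-coordinates. One could instead argue directly with the division-polynomial recursions reduced modulo $p$ (essentially tracking the Frobenius through $\psi_p$), but that route is more computational; the remaining points — $\gcd(f_n,g_n)=1$, the descent of $\widetilde f_n$ to $\F_q$, and the degree bookkeeping — are routine.
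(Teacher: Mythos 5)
Your proof is correct, but it follows a genuinely different route from the paper's. The paper simply quotes Cheon and Hahn~\cite{ChHa} for the key fact that $f_n$ is a polynomial in $X^{p^r}$ when $\E$ is ordinary and in $X^{p^{2r}}$ when $\E$ is supersingular, and then only performs the remaining bookkeeping: perfectness of $\F_q$ to extract the $p^r$-th (resp.\ $p^{2r}$-th) root, and the degree count $\deg f_n = n^2$ from~\eqref{eq:Deg} to get $\deg\widetilde f_n = p^r n_*^2 = \#\E[n]$ (resp.\ $n_*^2$). You instead prove that key fact from first principles: coprimality of $f_n$ and $g_n$ (via Lemma~\ref{lem:E-fn}) identifies $f_n/g_n$ as the reduced degree-$n^2$ map induced by $[n]$ on the $x$-line, separability of $x:\E\to\mathbb{P}^1$ in odd characteristic transfers $\deg_i[n]$ to this map, and the factorization $[p]=\widehat{\pi}\circ\pi$ pins down $\deg_i[p]\in\{p,p^2\}$ according to whether $\E$ is ordinary or supersingular; the standard factorization of an inseparable map through Frobenius then does the rest. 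What your approach buys is self-containedness (no appeal to~\cite{ChHa}) plus, as a byproduct, the analogous statement that $g_n$ is also a $p^s$-th power up to a constant; what it costs is reliance on a larger body of standard isogeny theory and a couple of extra verifications (the coprimality of $f_n,g_n$ and the matching of constants via monicity of $f_n$), all of which you handle correctly. The final degree computation $n^2/p^s=\#\E[n]$ agrees with the paper's in both cases.
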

\begin{proof}
We note that, for $n=p^rn_*$, $f_n$ is a polynomial of $X^{p^r}$ if
$\E$~is ordinary~(for example, see~\cite[Lemma~2]{ChHa}). Moreover,
$f_n$ is a polynomial of $X^{p^{2r}}$ if $\E$~is supersingular~(for
example, see~\cite{ChHa}). Recalling~\eqref{eq:Deg},
we see that  if $\E$ is ordinary, one can write
$f_n=\widetilde{f}_n(X)^{p^r}$, for some polynomial $\widetilde{f}_n$
in $\F_q[X]$ of degree $p^r{n_*}^2$. If  $\E$ is supersingular, then
$f_n=\widetilde{f}_n(X)^{p^{2r}}$, for some polynomial
$\widetilde{f}_n$ in $\F_q[X]$ of degree ${n_*}^2$. In other words,
$\deg \widetilde{f}_n=\#\E[n]$.
\end{proof}

\begin{lemma}\label{lem:fn b=0}
If $b\ne 0$, then for all positive integers $n$ the polynomial
$\widetilde{f}_n$, defined by Lemma~\ref{lem:fn}, is square-free.
\end{lemma}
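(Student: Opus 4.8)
The plan is to show that $\widetilde f_n$ has no repeated roots by exhibiting, for each root $x_0$ of $\widetilde f_n$, a point $P=(x_0,y_0)\in\E[n,P_0]$ with $y_0\ne 0$, and then arguing that the map $P\mapsto x(P)$ is unramified there. First I would invoke Lemma~\ref{lem:E-fn}: the roots of $f_n$ (equivalently of $\widetilde f_n$, since $f_n=\widetilde f_n^{p^r}$ or $\widetilde f_n^{p^{2r}}$) are exactly the $x$-coordinates of points in $\E[n,P_0]=P_0\oplus\E[n]$, where $P_0=(0,c)$ with $c^2=b$. Since $b\ne 0$ we have $c\ne 0$, so $P_0\ne -P_0$, i.e. $P_0$ is not a $2$-torsion point. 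The key observation is that the number of points in $\E[n,P_0]$ lying above a given admissible $x$-value is at most $2$ (namely $(x,y)$ and $(x,-y)$), and it equals $2$ unless $y=0$, i.e. unless the point is $2$-torsion. So a root $x_0$ of $\widetilde f_n$ could only be "lost to multiplicity" if the corresponding fiber in $\E[n,P_0]$ is a single $2$-torsion point, or if two distinct points of $\E[n,P_0]$ with the same $x$-coordinate somehow collapse.

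The main technical step is to compute the multiplicity of a root directly. Write $x(nP) = f_n(x)/g_n(x)$ as in~\eqref{eq:xfg}; the condition $x(nP)=0=x(P_0)$ means $f_n(x)=0$. For $x_0$ a root, pick $P=(x_0,y_0)\in\E$, and consider the rational function $\phi(P)=x(nP)$ on $\E$ (or rather on the relevant component). Its divisor of zeros consists of the $n$-division points of $P_0$ and of $-P_0$, each with multiplicity equal to the ramification of $x$ at that point. Since $x:\E\to\mathbb P^1$ is ramified precisely at the $2$-torsion points and at $\cO$, and $P_0,-P_0$ are not $2$-torsion (as $b\ne0$) and not $\cO$, each point of $\E[n,P_0]\cup\E[n,-P_0]$ is an unramified zero of $\phi$ of order $1$ \emph{at the level of $\E$}. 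Translating back down to the $x$-line: a root $x_0$ of $f_n$ has multiplicity equal to $\sum$ over points $P$ above $x_0$ in $\E[n,P_0]\cup\E[n,-P_0]$ of the local contribution; when $y_0\ne0$ the two points $(x_0,\pm y_0)$ contribute one each but one lies over $P_0$ and the other over $-P_0$ (since $n(x_0,-y_0)=-n(x_0,y_0)=-P_0$), so the multiplicity of $x_0$ in $f_n$ is exactly $1$ — UNLESS $p^r>1$, in which case $f_n=\widetilde f_n^{p^r}$ and the multiplicity in $\widetilde f_n$ is still $1$. The $y_0=0$ case cannot occur: if $(x_0,0)\in\E[n,P_0]$ then $P_0=n(x_0,0)$ would be $2$-torsion (since $n(x_0,0)$ has order dividing $2$ when $(x_0,0)$ is $2$-torsion — wait, more carefully: $2\cdot n(x_0,0)=n\cdot 2(x_0,0)=\cO$, so $P_0\in\E[2]$), contradicting $b\ne0$.

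I expect the main obstacle to be making the ramification/multiplicity bookkeeping rigorous in all characteristics, particularly reconciling the "geometric" count on $\E$ (where $\phi=x\circ[n]$ has a clean divisor) with the "algebraic" count of root multiplicities in $f_n\in\F_q[X]$, including correctly accounting for the $p^r$-th or $p^{2r}$-th power structure from Lemma~\ref{lem:fn}. An alternative, more elementary route that avoids divisor theory: differentiate the identity $g_n(x)\,x(nP) = f_n(x)$ implicitly using the chain rule for $x(nP)$ as a function of $x$ along $\E$ (via $\frac{d}{dx}$ and the group law), and show $f_n'(x_0)\ne0$ directly — but this requires $2y_0\frac{dx(nP)}{dx}\big|_{x_0}$ to be nonzero, which again reduces to $y_0\ne0$ (handled above) plus the separability of multiplication-by-$n$ restricted to the prime-to-$p$ part, i.e. to $n_*$; the inseparable $p$-part is exactly what Lemma~\ref{lem:fn} strips off into the exponent. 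Either way the crux is: \emph{$b\ne0$ forces every root of $f_n$ to come from a non-$2$-torsion division point, which is an unramified point of the $x$-map, hence a simple root of $\widetilde f_n$.}
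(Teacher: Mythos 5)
Your argument is correct in substance, but it takes a genuinely different and heavier route than the paper. The paper's proof is a purely global counting argument: by Lemma~\ref{lem:E-fn} the roots of $\widetilde{f}_n$ are the $x$-coordinates of the points of $\E[n,P_0]$; since $b\ne 0$ forces $P_0\ne -P_0$, no two distinct points $P,-P$ of $\E[n,P_0]$ can share an $x$-coordinate (if $P$ and $-P$ both mapped to $P_0$ under $[n]$ then $P_0=-P_0$), so $\E[n,P_0]$ contributes $\#\E[n,P_0]=\#\E[n]$ \emph{distinct} roots; comparing with $\deg\widetilde{f}_n=\#\E[n]$ from Lemma~\ref{lem:fn} forces all roots to be simple. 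This sidesteps entirely the ramification/divisor bookkeeping that you (rightly) identify as the delicate part of your approach. Your local computation — pulling back $x\circ[n]$, noting that $P_0$ and $-P_0$ are unramified for the $x$-map because they are not $2$-torsion, and that the inseparability of $[n]$ contributes exactly the factor $p^r$ (resp.\ $p^{2r}$) that Lemma~\ref{lem:fn} strips off — does go through and in fact proves slightly more (it pins down the exact multiplicity of each root of $f_n$ rather than of $\widetilde{f}_n$), but it needs care at the step ``the two points $(x_0,\pm y_0)$ contribute one each \dots so the multiplicity is $1$'': the correct statement is that the pullback of $X-x_0$ is $(x_0,y_0)+(x_0,-y_0)$, each appearing in $\mathrm{div}_0(x\circ[n])$ with order equal to the inseparability degree, so the multiplicity of $x_0$ in $f_n$ equals that common order, not a sum of two contributions. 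Both proofs ultimately rest on the same pivot, namely that $b\ne 0$ makes $P_0$ a non-$2$-torsion point; the paper's version simply converts this into a count of distinct roots matched against the degree, which is the cleaner way to package it.
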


\begin{proof}
>From Lemma~\ref{lem:E-fn}, we see that the roots of $f_n$ are the
$x$-coordinates of points of $\E[n,P_0]$. Then, from
Lemma~\ref{lem:fn}, we also see that the roots of $\widetilde{f}_n$
are the
$x$-coordinates of points of $\E[n,P_0]$. We note that, for $P\in
\E[n,P_0]$, the point $-P$ is in $\E[n,P_0]$ if and only if
$P_0=-P_0$, that is, $-P \in \E[n,P_0]$ if and only if $b=0$. So, if
$b\ne 0$, all points of $\E[n,P_0]$ have distinct $x$-coordinates. We
note that, $\#\E[n,P_0]=\#\E[n]$. Hence, the polynomial
$\widetilde{f}_n$ has $\#\E[n]$ distinct roots. From Lemma~\ref{lem:fn},
$\deg \widetilde{f}_n=\#\E[n]$. Therefore, if $b \ne0$, the
polynomial $\widetilde{f_n}$ is square-free.
\end{proof}

We now define the rational functions
\begin{equation}
\label{eq:Phi Psi}
\begin{split}
\Phi_{m,n}(X) &= \frac{f_m(X)f_n(X)}{g_m(X)g_n(X)}, \\
\Psi_{m,n}(X) &= \frac{(X^3 + aX + b) f_m(X)f_n(X)}{g_m(X)g_n(X)}.
\end{split}
\end{equation}

We need the following property of  $\Phi_{m,n}$ and
$\Psi_{m,n}$, which can be of independent interest.

\begin{lemma}\label{lem:fnfm}
If $\E$ is an ordinary elliptic curve with $b\ne 0$, then for all
distinct positive integers $m$ and $n$,
  neither $\Phi_{m,n}$ nor $\Psi_{m,n}$ is   a square of a
rational function in $\overline{\F}_q(X)$.
\end{lemma}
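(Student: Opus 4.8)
The plan is to pass to the algebraic closure $\overline{\F}_q$ and invoke the elementary criterion that a non-zero rational function $r\in\overline{\F}_q(X)$ is a square if and only if $\mathrm{ord}_\alpha(r)$ is even at every point $\alpha$ of $\mathbb{P}^1(\overline{\F}_q)$ (this is where $\mathrm{char}\,\F_q$ being odd, which follows from $\gcd(q,6)=1$, is used). So I would factor $f_m,f_n,g_m,g_n$ over $\overline{\F}_q$ and determine, at each $\alpha$, the parity of the order. Two structural inputs drive this. By~\eqref{eq:gn}, $g_j$ is a perfect square times the extra factor $X^3+aX+b$ exactly when $j$ is even; hence, with $\varepsilon_j\in\{0,1\}$ the parity of $j$, $\mathrm{ord}_\alpha(g_j)$ is even except that its parity is $\varepsilon_j$ when $\alpha$ is one of the three distinct roots $e_1,e_2,e_3$ of $X^3+aX+b$. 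Since $\E$ is ordinary, Lemma~\ref{lem:fn} gives $f_j=\widetilde{f}_j^{\,p^{r}}$ with $p^{r}$ odd, and Lemmas~\ref{lem:fn b=0} and~\ref{lem:E-fn} (using $b\ne0$) say $\widetilde{f}_j$ is square-free with root set $Z_j:=\{x(P):P\in\E[j,P_0]\}$, where $P_0=(0,c)$, $c^2=b$, and $\#Z_j=\deg\widetilde{f}_j=\#\E[j]$; thus $\mathrm{ord}_\alpha(f_j)$ is even except that it is odd for $\alpha\in Z_j$.

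A small observation makes the accounting clean: no $e_i$ lies in any $Z_j$. Indeed $x(P)=e_i$ forces $P=(e_i,0)$, a point of order $2$, so $e_i\in Z_j$ would give $jP=\pm P_0$, which is impossible since $jP\in\E[2]$ while $P_0\notin\E[2]$ (as $c\ne0$). Combining the three parity computations, at every finite $\alpha$
\[
\mathrm{ord}_\alpha(\Phi_{m,n})\equiv \mathbf{1}_{Z_m}(\alpha)+\mathbf{1}_{Z_n}(\alpha)+(\varepsilon_m+\varepsilon_n)\,\mathbf{1}_{\{e_1,e_2,e_3\}}(\alpha)\pmod 2,
\]
with an additional summand $\mathbf{1}_{\{e_1,e_2,e_3\}}(\alpha)$ in the case of $\Psi_{m,n}$; evenness at every finite point forces evenness at $\infty$ since the divisor of a rational function on $\mathbb{P}^1$ has degree $0$. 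Because the $e_i$ lie in no $Z_j$, it follows that $\Phi_{m,n}$ is a square precisely when $\varepsilon_m=\varepsilon_n$ and $Z_m=Z_n$, while $\Psi_{m,n}$ is a square precisely when $\varepsilon_m\ne\varepsilon_n$ and $Z_m=Z_n$. In both cases being a square would force $Z_m=Z_n$.

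What remains — and this is the step I expect to be the crux, and the only one where ordinarity is essential — is to show $Z_m=Z_n$ implies $m=n$. Since $\#Z_j=\#\E[j]$, it suffices to prove $\#\E[m]=\#\E[n]$ forces $m=n$. Writing $m=p^{a}m_*$ and $n=p^{b}n_*$ with $\gcd(p,m_*n_*)=1$ and using $\#\E[j]=j\,j_*$ for ordinary $\E$ (recalled in the text above), the hypothesis reads $p^{a}m_*^{2}=p^{b}n_*^{2}$; comparing $p$-adic valuations gives $a=b$ and then $m_*=n_*$, so $m=n$. (For supersingular $\E$ one only has $\#\E[j]=j_*^{2}$, which does not determine $j$ — for instance $\#\E[p]=\#\E[p^{2}]=1$ — so the hypothesis is genuinely needed here.) Therefore, for distinct $m$ and $n$, $Z_m\ne Z_n$, and by the previous paragraph neither $\Phi_{m,n}$ nor $\Psi_{m,n}$ is the square of a rational function in $\overline{\F}_q(X)$.
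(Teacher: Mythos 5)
Your proof is correct, and it rests on exactly the same structural inputs as the paper's: Lemmas~\ref{lem:fn} and~\ref{lem:fn b=0} (so that $f_j$ is an odd prime-power of a square-free polynomial with root set $Z_j$), the shape of $g_j$ in~\eqref{eq:gn}, the disjointness of $Z_j$ from the roots of $X^3+aX+b$ when $b\ne 0$, and ordinarity to force $\#\E[m]\ne\#\E[n]$ (equivalently $\deg\tf_m\ne\deg\tf_n$) for $m\ne n$. What differs is the organization. The paper runs three separate arguments: $\Psi_{m,n}$ is dispatched by an odd difference of numerator and denominator degrees; $\Phi_{m,n}$ with $m+n$ even by showing $\tf_m\tf_n$ cannot be a square because the two square-free factors have different degrees; and $\Phi_{m,n}$ with $m+n$ odd by the uncancelled cubic factor. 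You replace this case split by one computation of $\mathrm{ord}_\alpha \bmod 2$ at every $\alpha\in\mathbb{P}^1(\overline{\F}_q)$, which recovers all three cases at once and in fact yields the sharper ``if and only if'' characterization of when $\Phi_{m,n}$ and $\Psi_{m,n}$ are squares. A side benefit of your treatment of $\Psi_{m,n}$ is that it does not depend on knowing $\deg g_n$ exactly (the paper's degree-parity step implicitly uses $\deg g_n=n^2-1$, which deserves a word when $p\mid n$); handling the point at infinity via the degree-zero property of principal divisors sidesteps this. Two cosmetic remarks: your $\varepsilon_j$ should be the indicator that $j$ is \emph{even} (the cubic appears in $g_j$ exactly for even $j$), though since only $\varepsilon_m+\varepsilon_n\bmod 2$ enters the conclusion is unaffected; and the square criterion over an algebraically closed constant field holds in any characteristic --- where odd characteristic is genuinely used is in making the exponents $p^{r}$ odd.
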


\begin{proof}
{From}~\eqref{eq:Deg} and~\eqref{eq:gn}, we see that the difference of
$\deg f_n$ and $\deg g_n$ is odd. So, the difference between the
degrees of the numerator and denominator of $\Psi_{m,n}$
is odd. So, it cannot  be a square of another rational
function.

For $\Phi_{m,n}$, first, we assume that $m+n$ is
even. From~\eqref{eq:gn}, we see that $g_mg_n$ is a square.
Let $m={p^r}m_*$ and $n=p^sn_*$ with $\gcd(m_*n_*,p)=1$.
By Lemmas~\ref{lem:fn} and~\ref{lem:fn b=0}, we write
$f_m=\widetilde{f}_m^{p^r}$ and $f_n=\widetilde{f}_n^{p^s}$, where
the polynomials $\widetilde{f}_m$, $\widetilde{f}_n$ are square-free.
Moreover, $\deg \widetilde{f}_m=p^r{m_*}^2$ and $\deg
\widetilde{f}_n=p^s{n_*}^2$. So, for distinct $m$, $n$, $\deg
\widetilde{f}_m\ne \deg \widetilde{f}_n$. Thus,
$\widetilde{f}_m\widetilde{f}_n$ can not be a square of a polynomial
in $\overline{\F}_q[X]$. The same is true for the product of $f_m$
and $f_n$. Hence, $\Phi_{m,n}$ can not be a square of a rational
function.

Now, we assume that $m+n$ is odd. From~\eqref{eq:gn}, we have
$g_mg_n=(X^3+aX+b)h^2_mh^2_n$.
We recall that the roots of $X^3+aX+b$ are corresponded to the
$x$-coordinates of points of $\E[2]$. Also, the roots of $f_m$ are
corresponded to the $x$-coordinates of points of $\E[m,P_0]$. Clearly
the sets $\E[2]$ and $\E[m,P_0]$ have no common point if $b\ne0$.
Therefore, $X^3+aX+b$ has no common root with $f_m$ and similarly
with $f_n$ where $b\ne 0$. So, again $\Phi_{m,n}$ can not be a square
of a rational function.
\end{proof}

\subsection{Exponential Sums Along Elliptic Curves}

We recall
the following bound of character sums with
  a nontrivial additive character $\psi$ of $\F_q$,
which is given  in~\cite{LanShp}.

\begin{lemma}
\label{lem:ExpSum}
Fix integers $1 \le d_1 <  \ldots < d_s \le D$
and fix   $c_1, \dots, c_s\in \F_q$ with $c_s\ne0$.
Let $\E$ be an ordinary elliptic curve defined over $\F_q$.
Then  the following bound holds:
$$
  \sum_{\substack{Q \in \cH\\ Q \ne \cO}}
\psi\(\sum_{i = 1}^s c_i x\(d_iQ\)\)   = O\( sD^2 q^{1/2}\),
$$
where  $\cH$ is an arbitrary subgroup of  $\E(\F_q)$
of order $t = \# \cH$ such that
$$
\gcd (t, d_1\cdots d_s) = 1.
$$
\end{lemma}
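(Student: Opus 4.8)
\medskip
\noindent\textit{Proof proposal.}\quad
The plan is to pass from the sum over the subgroup $\cH$ to character sums over all of $\E(\F_q)$ by Fourier inversion, and then to bound the resulting twisted sums by the Weil estimate for character sums along the curve $\E$. By~\eqref{eq:xfg} we may write $\sum_{i=1}^s c_i\,x(d_iQ)=h(x(Q))$, where $h=\sum_{i=1}^s c_i\, f_{d_i}/g_{d_i}\in\F_q(X)$; the coprimality $\gcd(t,d_1\cdots d_s)=1$ gives $\cH\cap\E[d_i]=\{\cO\}$, so $h\circ x$ is regular at every point of $\cH\setminus\{\cO\}$. Let $e=\#\E(\F_q)/t$, let $\vartheta$ run over the $e$ characters of the group $\E(\F_q)$ trivial on $\cH$, and use the orthogonality relation $e^{-1}\sum_\vartheta\vartheta=\mathbf{1}_{\cH}$ on $\E(\F_q)$ to get the exact identity
$$
\sum_{\substack{Q\in\cH\\ Q\ne\cO}}\psi\!\left(\sum_{i=1}^s c_i\,x(d_iQ)\right)
\;=\;\frac1e\sum_{\vartheta}\Sigma_\vartheta,
\qquad
\Sigma_\vartheta:=\sum_{Q\in\E(\F_q)\setminus\bigcup_i\E[d_i]}\vartheta(Q)\,\psi\bigl(h(x(Q))\bigr);
$$
restricting the range is harmless since removing $\bigcup_i\E[d_i]$ deletes only $\cO$ from $\cH$. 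It now suffices to prove $\Sigma_\vartheta=O(sD^2q^{1/2})$ uniformly in $\vartheta$.

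By pullback along the Lang isogeny $\mathrm{id}-\phi_q\colon\E\to\E$ (an \'{e}tale isogeny with kernel $\E(\F_q)$, where $\phi_q$ is the $q$-power Frobenius) each $\vartheta$ becomes a rank-one character of the function field $\F_q(\E)$ that is unramified at every place, so it contributes nothing to the conductor. The pole divisor of $h\circ x$ on $\E$ is controlled via~\eqref{eq:Deg}: since $x$ has a single pole, at $\cO$, of order $2$, the function $x\circ[d_i]$ has pole divisor $[d_i]^{*}\bigl(2(\cO)\bigr)$, of degree $2d_i^2$, so $h\circ x=\sum_i c_i\,(x\circ[d_i])$ has pole divisor of degree at most $2\sum_i d_i^2\le 2sD^2$. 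The Weil bound for character sums along a genus-one curve (a standard instance of which is invoked in~\cite{LanShp}) then yields $\Sigma_\vartheta=O(sD^2q^{1/2})$, \emph{provided} the twisted character $\vartheta\cdot(\psi\circ h\circ x)$ is not geometrically trivial on~$\E$.

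The crux is this non-degeneracy, and it is where the hypotheses $c_s\ne0$ and ``$\E$ ordinary'' come in (together with $\gcd(d_s,p)=1$, which one may assume; the remaining case $p\mid d_s$ is disposed of separately using the factorisation of $[p]$ on an ordinary curve). Choose $T\in\E(\overline{\F}_q)$ of exact order $d_s$; it exists because $\gcd(d_s,p)=1$ makes $\E[d_s]\cong\Z/d_s\Z\times\Z/d_s\Z$. Since $d_s>d_i$ for $i<s$, the order $d_s$ does not divide $d_i$, hence $T\notin\E[d_i]$ for $i<s$, so among the summands of $h\circ x$ only $c_s\,(x\circ[d_s])$ has a pole at $T$; as $[d_s]$ is \'{e}tale there, $h\circ x$ has a pole of order exactly $2$ at $T$ (here $c_s\ne0$ rules out cancellation). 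Since $p$ is odd, $2$ is prime to $p$, so the local Swan conductor of $\psi\circ h\circ x$ at $T$ equals $2$, whereas the everywhere-unramified $\vartheta$ contributes $0$; thus the twisted character is (wildly) ramified at $T$ and cannot be geometrically trivial, \emph{for every} $\vartheta$. This is the additive-character counterpart of the ``not a perfect square'' arguments in the proofs of Lemmas~\ref{lem:fn b=0} and~\ref{lem:fnfm}, now excluding $p$-th powers rather than squares. Plugging this into the displayed bound for each $\vartheta$ and summing over the $e$ of them gives $\sum_{Q\in\cH\setminus\{\cO\}}\psi\bigl(\sum_{i=1}^s c_i\,x(d_iQ)\bigr)=e^{-1}\sum_\vartheta\Sigma_\vartheta=O(sD^2q^{1/2})$, which is the assertion. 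I expect the main obstacle to be carrying the non-degeneracy through uniformly over all $\vartheta$ and handling it cleanly when some $d_i$ shares a factor with $q$; once a single pole of order $2$ at a point of exact order $d_s$ has been isolated, the Weil bound applies across the board.
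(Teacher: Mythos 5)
The paper does not prove Lemma~\ref{lem:ExpSum} at all: it is quoted verbatim from Lange and Shparlinski~\cite{LanShp}, so there is no in-paper argument to compare yours against. That said, your reconstruction is essentially the method of that reference, and its outline is sound: detecting membership in $\cH$ by averaging over the $e=\#\E(\F_q)/t$ characters of $\E(\F_q)$ trivial on $\cH$; identifying each such character, via the Lang isogeny, with an everywhere-unramified character of the function field; bounding the pole divisor of $\sum_i c_i\,(x\circ[d_i])$ by $2\sum_i d_i^2\le 2sD^2$; and applying the Bombieri--Weil bound to each twisted sum. The supporting details you give are correct — $\gcd(t,d_1\cdots d_s)=1$ does guarantee that deleting $\bigcup_i\E[d_i]$ removes only $\cO$ from $\cH$, the choice of $T$ of exact order $d_s$ does force $T\notin\E[d_i]$ for $i<s$ since $d_i<d_s$, and a pole of order $2$ (prime to $p$, as $p\ge 5$) at $T$ does give a positive Swan conductor that an unramified $\vartheta$ cannot cancel.

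The one genuinely incomplete point is the case $p\mid d_s$, which the hypotheses do not exclude and which you only wave at. Writing $d_s=p^rd_*$ with $r\ge 1$, the map $[d_s]$ on an ordinary curve is inseparable with ramification index $p^r$ at every kernel point, so each pole of $x\circ[d_s]$ has order $2p^r$; a pole of order divisible by $p$ does not by itself yield a positive Swan conductor, since one must first reduce the local expansion modulo $u\mapsto u^p-u$. You name the correct repair (use the factorisation of $[p]$ on an ordinary curve to write $x(p^rd_*Q)$ as a $p^r$-th power of a function with poles of order $2$, and absorb the Frobenius into the additive character), and this is precisely where the ordinarity hypothesis is needed, but the step is asserted rather than carried out. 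With that case written out, the proposal is a faithful reconstruction of the cited proof.
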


\section{Main Results}

\subsection{Sums $U(N)$}

\begin{theorem}\label{thm:UN}
For a prime power $q$ with $\gcd(q,6)=1$ and an ordinary elliptic
curve $\E$ given by~\eqref{eq:Weier}
with $b \ne 0$, we have
$$
U(N) \ll N^6 q + N q^2
$$
for every positive integer $N$.
\end{theorem}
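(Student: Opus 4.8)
The plan is to expand the square, use the multiplicativity of $\chi$ to decouple the two variables, and reduce the problem to bounding complete multiplicative character sums of the rational functions $\Phi_{m,n}$ and $\Psi_{m,n}$ of~\eqref{eq:Phi Psi}. Since $\chi$ is real-valued and satisfies $\chi(uv)=\chi(u)\chi(v)$, for any $P,Q\in\E(\F_q)$ we have
$$
|S(P,Q;N)|^2=\sum_{m,n=1}^{N}\chi\(x(mP)x(nP)\)\,\chi\(x(mQ)x(nQ)\),
$$
so summing over all $P,Q\in\E(\F_q)$ yields $U(N)=\sum_{m,n=1}^{N}W_{m,n}^2$, where
$$
W_{m,n}=\sum_{P\in\E(\F_q)}\chi\(x(mP)x(nP)\).
$$
It therefore suffices to estimate $W_{m,n}$, and I would treat the diagonal $m=n$ and the off-diagonal $m\ne n$ separately. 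On the diagonal, $\chi\(x(nP)^2\)$ equals $1$ when $x(nP)\ne 0$ and $0$ otherwise, so $W_{n,n}$ counts the points $P\in\E(\F_q)$ with $x(nP)\ne 0$; the exceptional points are those with $nP\in\{\cO,P_0,-P_0\}$, where $P_0=(0,c)$ and $c^2=b$, and their number is $O(\#\E[n])=O(n^2)$ by the description of $\E[n]$ recalled in Section~2. Hence $W_{n,n}=\#\E(\F_q)+O(n^2)=q+O(q^{1/2}+n^2)$ by Hasse's bound, so $\sum_{n=1}^{N}W_{n,n}^2\ll Nq^2+N^5$.

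For $m\ne n$ I would parametrise the inner sum by $x=x(P)$. Discarding the $O(N^2)$ points $P$ with $g_m(x(P))g_n(x(P))=0$ (which contribute $\chi(0)=0$), using that exactly $1+\chi(x^3+ax+b)$ points of $\E(\F_q)$ lie above a given $x\in\F_q$, and invoking $x(mP)x(nP)=\Phi_{m,n}(x(P))$ from~\eqref{eq:xfg} together with $\Psi_{m,n}=(X^3+aX+b)\Phi_{m,n}$, one obtains
$$
W_{m,n}=\sum_{x\in\F_q}\chi\(\Phi_{m,n}(x)\)+\sum_{x\in\F_q}\chi\(\Psi_{m,n}(x)\)+O(N^2),
$$
the sums being over all $x\in\F_q$ with poles omitted. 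By Lemma~\ref{lem:fnfm}, since $\E$ is ordinary and $b\ne 0$, neither $\Phi_{m,n}$ nor $\Psi_{m,n}$ is a square in $\overline{\F}_q(X)$; and by~\eqref{eq:Deg} and~\eqref{eq:gn} the numerators and denominators of $\Phi_{m,n}$ and $\Psi_{m,n}$ have degree $O(m^2+n^2)=O(N^2)$. Hence the Weil bound for multiplicative character sums of rational functions gives $W_{m,n}\ll N^2q^{1/2}$, and summing over the at most $N^2$ pairs with $m\ne n$ yields $\sum_{m\ne n}W_{m,n}^2\ll N^6q$. Combining this with the diagonal contribution and noting $N^5\le N^6q$ gives $U(N)\ll N^6q+Nq^2$.

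The step I expect to be the main obstacle is the off-diagonal reduction together with the application of Weil's bound: one must carefully convert $W_{m,n}$ into a genuine character sum $\sum_{x\in\F_q}\chi(R(x))$, correctly accounting for the point at infinity, the poles arising from the zeros of $g_m$ and $g_n$, and the number of points of $\E(\F_q)$ above each $x$-coordinate. The use of Weil's theorem is legitimate only because Lemma~\ref{lem:fnfm}---which is exactly where the hypotheses that $\E$ is ordinary and $b\ne 0$ enter---excludes the degenerate case in which the relevant rational function is a perfect square; in that case the character sum would be of order $q$ and the whole estimate would collapse.
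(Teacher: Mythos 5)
Your proposal is correct and follows essentially the same route as the paper: expand the square into $\sum_{m,n}W_{m,n}^2$, bound the diagonal trivially by $O(Nq^2)$ (your more refined count of $W_{n,n}$ reduces to the same thing), and for $m\ne n$ convert $W_{m,n}$ into complete character sums of $\Phi_{m,n}$ and $\Psi_{m,n}$ via the $1+\chi(u^3+au+b)$ point count, then apply the Weil bound, justified by Lemma~\ref{lem:fnfm}. Your extra care with poles and the point at infinity only contributes an absorbable $O(N^2)$ and does not change the argument.
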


\begin{proof} Expanding the square and changing the order of summation,
we obtain
\begin{eqnarray*}
U(N) &= &\sum_{m,n=1}^N \sum_{P,Q\in \E(\F_{q})}
\chi\(x(mP)x(nP)x(mQ)x(nQ)\) \\
   &= &  \sum_{m,n=1}^N \left|\sum_{P \in \E(\F_{q})}
\chi\(x(mP)x(nP)\)\right|^2 .
\end{eqnarray*}

For $n=m$, we estimate the inner sum over $P$ trivially as $O(q)$.
Thus the total contribution to $U(N)$ from such terms is
\begin{equation}
\label{eq: m eq n}
U^{(=)}(N) = O(N q^2).
\end{equation}

If $n\ne m$, as in~\cite[Section~4.2]{JJV}  we note that any  $u \in \F_q$
appears as $u = x(P)$ for some point $P \in \E(\F_{q})$ exactly
$1 + \chi(u^3 + au + b)$ times,
where $a$ and $b$ are as in~\eqref{eq:Weier}.
Therefore, using~\eqref{eq:xfg}, we derive
$$
\sum_{P \in \E(\F_{q})}  \chi\(x(mP)x(nP)\)
=  \sum_{u \in \F_q} \chi\(\Phi_{m,n}(u)\)
+  \sum_{u \in \F_q} \chi\(\Psi_{m,n}(u)\),
$$
where the polynomials $\Phi_{m,n}(X)$ and
$\Psi_{m,n}(X)$ are given by~\eqref{eq:Phi Psi}.

Now, by Lemma~\ref{lem:fnfm}, we see that the Weil
bound applies to both sums, see~\cite[Theorems~11.23]{IwKow},
and together with~\eqref{eq:Deg} leads to the estimate
$$
\sum_{P \in \E(\F_{q})}  \chi\(x(mP)x(nP)\) = O\(N^2 q^{1/2}\)
$$
for $n \ne m$.
Thus the total contribution to $U(N)$ from such terms is
\begin{equation}
\label{eq: m ne n}
U^{(\ne)}(N) = O\(N^2\(N^2 q^{1/2}\)^2\)= O\(N^6 q\).
\end{equation}

Combining~\eqref{eq: m eq n} and~\eqref{eq: m ne n},
we finish the proof.
\end{proof}

Clearly, Theorem~\ref{thm:UN} improves the trivial bound
$U(N) \ll  N^2 q^2$ for $N \le q^{1/4 -\delta}$
with any fixed $\delta> 0$. This is well within
the range of interest in~\cite{JJV} which starts with $N$
of order $(\log q)^2$.
Furthermore, if $N \le q^{1/5}$ then the bounds takes
the form
$U(N) \ll Nq^2$, thus confirming that for almost all
$P,Q \in  \E(\F_{q})$ the sums $S(P,Q;N)$ have square
root cancellations (see comments after~\cite[Conjecture~4.1]{JJV}).

\subsection{Sums $V_k(\vec{c},\cH; N)$}

We note that an appropriate version of  the results
of this section holds
for any $q$ (in fact even without the condition
$\gcd(q,6) =1$). However, to make our argument more transparent,
  we assume that
$q= p$ is prime. It is exactly the case which is
needed for our prime goal, which is studying the bit patterns
of the vectors~\eqref{eq:vect}.

\begin{theorem}\label{thm:VN}
For   a prime $p$,  an ordinary elliptic  curve $\E$
and a  subgroup $\cH$  of  $\E(\F_p)$
of order $t$, uniformly
over all non-zero vectors $\vec{c} \in \F_p^k$, we have
$$
V_k(\vec{c},\cH; N) \ll kN^{4k} p^{1/2} +  k N^{2k-1} t
$$
for all  positive integers $k$ and $N$ with $\gcd(N!,t) = 1$.
\end{theorem}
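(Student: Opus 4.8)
The plan is to mimic the proof of Theorem~\ref{thm:UN}: expand the square $|T_k(\vec c,R;N)|^2$, switch the order of summation so that the sum over $R\in\cH$ is innermost, and then recognize the inner sum as a character sum over a subgroup of the type controlled by Lemma~\ref{lem:ExpSum}. Writing $\vec n=(n_1,\dots,n_k)$ and $\vec m=(m_1,\dots,m_k)$ for two independent tuples in $\{1,\dots,N\}^k$, and setting $d_j(\vec n)=\prod_{i=1}^j n_i$, we get
\begin{equation*}
V_k(\vec c,\cH;N)=\sum_{\vec n,\vec m}\ \sum_{R\in\cH}\psi\!\left(\sum_{j=1}^k c_j\bigl(x(d_j(\vec n)R)-x(d_j(\vec m)R)\bigr)\right).
\end{equation*}
For each fixed pair $(\vec n,\vec m)$ the inner sum is of the shape $\sum_{R\in\cH}\psi\bigl(\sum_i e_i x(\delta_i R)\bigr)$, where the $\delta_i$ run over the distinct values among $\{d_1(\vec n),\dots,d_k(\vec n),d_1(\vec m),\dots,d_k(\vec m)\}$ and the $e_i$ are the corresponding sums of $\pm c_j$.

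The dichotomy that controls the estimate is whether the resulting ``collapsed'' linear form in the $x(\delta_i R)$ is trivial or not. Call $(\vec n,\vec m)$ \emph{degenerate} if, after cancellation, all coefficients $e_i$ vanish; otherwise \emph{nondegenerate}. For a degenerate pair the inner sum is trivially $O(t)$, and one must bound the number of degenerate pairs. Since $\vec c\ne\vec 0$, pick an index $j_0$ with $c_{j_0}\ne0$; I would argue that degeneracy forces $d_{j_0}(\vec n)$ to coincide with some $d_j(\vec m)$ (and vice versa), which constrains the tuples enough to give at most $O(N^{2k-1})$ degenerate pairs — losing just one factor of $N$ over the trivial count $N^{2k}$. (Concretely: the largest index $j$ with $c_j\ne 0$ contributes a term $x(d_j(\vec n)R)$ that can only be cancelled by an equal $d_j$-value from the other tuple, pinning down one coordinate in terms of the others.) This yields the term $kN^{2k-1}t$ — the factor $k$ being a harmless overcount in how many ways cancellations can be bookkept.

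For a nondegenerate pair the coefficient vector $(e_i)$ is nonzero, the $\delta_i$ are at most $2k$ distinct integers each bounded by $N^k$, and every $\delta_i\mid N!$ so $\gcd(t,\prod_i\delta_i)=1$ by hypothesis; hence Lemma~\ref{lem:ExpSum} (with $s\le 2k$, $D\le N^k$) gives a bound $O(kN^{2k}p^{1/2})$ for the inner sum, after absorbing the excluded term $R=\cO$ (which contributes $O(1)$) into the error. Summing over at most $N^{2k}$ nondegenerate pairs produces the term $kN^{4k}p^{1/2}$, and combining the two contributions gives
\begin{equation*}
V_k(\vec c,\cH;N)\ll kN^{4k}p^{1/2}+kN^{2k-1}t,
\end{equation*}
uniformly in $\vec c\ne\vec 0$, as claimed. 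The main obstacle is the combinatorial counting of degenerate pairs: one must be careful that the multiplicative structure $d_j(\vec n)=\prod_{i\le j}n_i$ (rather than the $n_i$ themselves) enters the linear form, so the cancellation condition is a system of equations among \emph{products} of the $n_i$; the key observation making this tractable is that these partial products are nested, so fixing the value of one partial product together with all but one of the $n_i$ determines the remaining $n_i$, which is exactly what gives the $N^{2k-1}$ bound. A secondary point to handle cleanly is that $x(\cdot)$ is only defined off $\cO$, but since $\gcd(N!,t)=1$ none of the multipliers $\delta_i$ kills any nonzero element of $\cH$, so $x(\delta_i R)$ is well-defined for all $R\in\cH\setminus\{\cO\}$ and the single point $R=\cO$ costs only $O(1)$ per pair.
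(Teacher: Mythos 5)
Your overall architecture is exactly the paper's: expand the square, push the sum over $R\in\cH$ inside, apply Lemma~\ref{lem:ExpSum} (with $s\le 2k$, $D\le N^k$) to the pairs whose collapsed linear form is nontrivial, and count the remaining pairs to get the $kN^{2k-1}t$ term. The application of the lemma is fine: nonvanishing of \emph{some} collapsed coefficient suffices (discard the multipliers with zero coefficient and take the largest survivor), and the gcd hypothesis of the lemma follows from $\gcd(N!,t)=1$ because every $\delta_i$ has all its prime factors at most $N$ (note $\delta_i\mid N!$ itself is false in general). The gap is in your count of degenerate pairs. Your key claim --- that the term $c_{j_0}x(d_{j_0}(\vec n)R)$ attached to the largest index $j_0$ with $c_{j_0}\ne 0$ ``can only be cancelled by an equal $d_j$-value from the other tuple'' --- is false, because the partial products $d_j(\vec n)=\prod_{i\le j}n_i$ are only nondecreasing, not strictly increasing: if $n_{j_0}=1$ then $d_{j_0}(\vec n)=d_{j_0-1}(\vec n)$, the two terms merge with coefficient $c_{j_0-1}+c_{j_0}$, and this can vanish with no cross-tuple coincidence at all. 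Concretely, for $k=2$ and $\vec{c}=(1,-1)$, every pair with $n_2=m_2=1$ is degenerate even though the sets $\{d_j(\vec n)\}$ and $\{d_j(\vec m)\}$ can be disjoint. So degeneracy does not force the equation $d_{j_0}(\vec n)=d_{j_0}(\vec m)$ that you use to pin down one coordinate.

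The bound $O(kN^{2k-1})$ on the bad pairs is nevertheless correct, and the paper's proof shows the clean way to get it: \emph{before} setting up the good/bad dichotomy it separates off the $O(kN^{2k-1})$ tuples in which some $m_i$ or $n_i$ equals $1$ and bounds their inner sums trivially by $t$. On the complement all entries are at least $2$, the partial products within each tuple are strictly increasing, and then the top nonzero coefficient can only be killed by the cross-tuple coincidence $m_1\cdots m_{j_0}=n_1\cdots n_{j_0}$, which determines one coordinate from the others and yields at most $kN^{2k-1}$ bad pairs. (Alternatively, you can patch your argument directly by observing that degeneracy forces $n_{j_0}=1$, or $m_{j_0}=1$, or $d_{j_0}(\vec n)=d_{j_0}(\vec m)$, each a codimension-one condition.) With that repair your proof coincides with the paper's.
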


\begin{proof}  Squaring out, expanding and changing the order of
summation, we obtain
\begin{equation}
\begin{split}
\label{eq: mult sum R}
V_k(\vec{c},\cH; N) & =  \sum_{m_1, \ldots, n_k=1}^N
  \sum_{n_1, \ldots, n_k=1}^N  \\
  &  \quad \sum_{R\in \cH} \psi\(\sum_{j=1}^k c_j x\(\(\prod_{i =1}^j m_i\) R\)
  - \sum_{j=1}^k c_j x\(\(\prod_{i =1}^j n_i\) R\)\).
\end{split}
\end{equation}

For $O(kN^{2k-1})$ choices of  $m_1, \ldots, m_k$ and $n_1, \ldots, n_k$
with at least
one value equal to 1 we estimate the inner sum trivially as $t$.
So the total contribution from such terms is
\begin{equation}
\label{eq:V1}
V_1 \ll kN^{2k-1}t.
\end{equation}

We say that the sequence of integers
$ m_1, \ldots, m_k, n_1, \ldots, n_k\ge 2 $
is product distinct with respect to
$\vec{c}$ the vectors
$$
\(m_1, m_1m_2, \ldots, m_1m_2\ldots m_k\)
\mand\(n_1, n_1n_2, \ldots, n_1n_2\ldots n_k\)
$$
distinct at all positions $j$ for which $c_j \in \F_p^*$.

We see from Lemma~\ref{lem:ExpSum} that if
$ m_1, \ldots, m_k, n_1, \ldots, n_k\ge 2 $
is product distinct with respect to
$\vec{c}$ then  the inner
sum over $R$ in~\eqref{eq: mult sum R} is 
$O\(kN^{2k} p^{1/2}\)$.
Otherwise we estimate this sum trivially as $O(t)$.

The total
contribution from these terms   is
\begin{equation}
\label{eq:V2 prelim}
V_2 \ll k N^{4k} p^{1/2} + M t.
\end{equation}
where $M$ is the number of
sequence of integers
$ N  \ge m_1, \ldots, m_k, n_1, \ldots, n_k\ge 2 $
which are not product distinct with respect to
$\vec{c}$.

To estimate $M$,
we assume that $c_{j_0} \ne 0$.  If all values
of $m_1, \ldots, m_k$
and all values of $n_1, \ldots , n_k$, but
$n_{j_0}$ are fixed,
then $n_{j_0}$ must satisfy the
equation
$$
m_1\ldots m_{j_0} = n_1\ldots n_{j_0}
$$
and thus can take at most one possible value.
Since $j_0$
takes $k$ distinct values,
the total contribution we get $M \le  k N^{2k-1}$
(the vector $\vec{c} = (1,0, \ldots, 0)$
shows
that this bound cannot be improved).
Substituting this bound in~\eqref{eq:V2 prelim} we obtain
\begin{equation}
\label{eq:V2}
V_2 \ll k N^{4k} p^{1/2} +  k N^{2k-1} t.
\end{equation}

Combining~\eqref{eq:V1} and \eqref{eq:V2},
we conclude the proof.
\end{proof}

\subsection{Applications}

We now address the question of~\cite{JJV} on the distribution
of bits of the vectors~\eqref{eq:vect}.

Let now $q=p$ be prime. We assume that  $\F_p$
is represented by the elements of the set $\{0, 1, \ldots, p-1\}$.

For a point $R \in \E(\F_p)$,
positive integers $k$, $\ell$, $N$ and  $k$ bit strings $\sigma_1,
\ldots, \sigma_k$
of length $\ell$ each, we  use
$A_{k,\ell}(R, N; \sigma_1,  \ldots, \sigma_k)$
to denote
the number of times the least significant  bits of
the binary expansions of the
components of the vectors~\eqref{eq:vect}
are $\sigma_1,  \ldots, \sigma_k$, respectively.
It is natural to compare
 $A_{k,\ell}(R, N; \sigma_1,  \ldots, \sigma_k)$
with  $2^{-k \ell}N^k$. Thus, for a subgroup $\cH \subseteq \E(\F_p)$,
we consider the average deviation $\Delta_{k,\ell}(\cH, N)$
of
$A_{k,\ell}(R, N; \sigma_1,  \ldots, \sigma_k)$ from
its expected value:
$$
\Delta_{k,\ell}(\cH, N) =
  \sum_{R\in \cH}
  \max_{\sigma_1,  \ldots, \sigma_k} \left| A_{k,\ell}(R, N; \sigma_1,
\ldots, \sigma_k) -  2^{-k \ell} N^k \right|,
$$
where the maximum is taken over all $2^{k\ell}$ choices
of $k$ bit strings $\sigma_1,  \ldots, \sigma_k$ of length $\ell$.

\begin{theorem}\label{thm:DN}
There is an absolute constant $C > 0$
such that for a prime $p> k$,  an ordinary curve $\E$
and a  subgroup $\cH$  of  $\E(\F_p)$
of order $t$, uniformly
over all non-zero vectors $\vec{c} \in \F_p^k$, we have
$$
\Delta_{k,\ell}(\cH, N)\le \(N^{2k} p^{1/4}t^{1/2} + N^{k-1/2} t\) (C \log p)^k
$$
for all positive integers $k$, $\ell$ and $N$ with $\gcd(N!,t) = 1$.
\end{theorem}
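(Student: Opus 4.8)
The plan is to write $A_{k,\ell}(R,N;\sigma_1,\dots,\sigma_k)$ as a box-counting function in $(\Z/p\Z)^k$ and then combine the Erd\H{o}s--Tur\'an--Koksma inequality with Theorem~\ref{thm:VN}. The first step is to convert a ``least significant bits'' condition into membership in an interval modulo $p$. For $u\in\{0,1,\dots,p-1\}$ and a length-$\ell$ bit string $\sigma$ encoding the integer $s=s_\sigma\in\{0,\dots,2^\ell-1\}$, the $\ell$ least significant bits of $u$ equal $\sigma$ exactly when $u\equiv s\pmod{2^\ell}$, i.e. $u\in\set{s+2^\ell\nu:\,0\le\nu\le\fl{(p-1-s)/2^\ell}}$. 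Multiplying by $2^{-\ell}\bmod p$, which is legitimate since $p$ is odd, this is equivalent to $2^{-\ell}u\bmod p\in\cI_s$, where $\cI_s\subset\Z/p\Z$ is a cyclic interval with $\#\cI_s=2^{-\ell}p+O(1)$. Put $x_j=x\(\(\prod_{i=1}^j n_i\)R\)$; since $\gcd(N!,t)=1$ forces every product $\prod_{i=1}^j n_i$ to be coprime to $t$, the coordinate $x_j\in\F_p$ is well defined (finite) for every $R\ne\cO$ and every $(n_1,\dots,n_k)\in\{1,\dots,N\}^k$, and for $R\ne\cO$
$$
A_{k,\ell}(R,N;\sigma_1,\dots,\sigma_k)=\#\set{(n_1,\dots,n_k)\in\{1,\dots,N\}^k\,:\,2^{-\ell}x_j\bmod p\in\cI_{s_j}\ \text{for all}\ j}.
$$
The single point $R=\cO$ contributes at most $N^k$ to $\Delta_{k,\ell}(\cH,N)$ and will be absorbed at the end; assume $R\ne\cO$ henceforth.

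Identifying $\Z/p\Z$ with $\set{0,1/p,\dots,(p-1)/p}\subset\R/\Z$, the quantity above counts how many of the $N^k$ points $\(2^{-\ell}x_1,\dots,2^{-\ell}x_k\)$ --- indexed by $(n_1,\dots,n_k)$ --- land in a fixed box whose volume $\prod_{j=1}^k\#\cI_{s_j}/p$ differs from $2^{-k\ell}$ by $O(k/p)$. Hence, up to an additive error $O(kN^k/p)$, the quantity $\abs{A_{k,\ell}(R,N;\sigma_1,\dots,\sigma_k)-2^{-k\ell}N^k}$ is bounded by $N^k$ times the discrepancy of these points with respect to boxes. By the Erd\H{o}s--Tur\'an--Koksma inequality with parameter $H=p$, and observing that $\sum_{j}h_j\cdot 2^{-\ell}x_j$ is precisely the argument occurring in $T_k$ once the invertible factor $2^{-\ell}$ is pushed into the coefficient vector, we get with $\psi$ the additive character $\psi(z)=\exp(2\pi i z/p)$,
$$
\max_{\sigma_1,\dots,\sigma_k}\abs{A_{k,\ell}(R,N;\sigma_1,\dots,\sigma_k)-2^{-k\ell}N^k}\ll\frac{kN^k}{p}+c_0^k\sum_{0<\|\vec h\|_\infty\le p}\frac{\abs{T_k(2^{-\ell}\vec h,R;N)}}{\prod_{j=1}^k\max(1,\abs{h_j})},
$$
where $c_0$ is absolute (the $c_0^k$ gathering the dimension-dependent constants in the inequality and in expressing a cyclic interval as a bounded union of ordinary ones), $\vec h$ runs over $\Z^k$, and $2^{-\ell}\vec h=(2^{-\ell}h_1,\dots,2^{-\ell}h_k)\in\F_p^k$ is non-zero whenever $\vec h\ne\vec 0$.

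Finally, sum over $R\in\cH$ and interchange summations. For each fixed $\vec h\ne\vec 0$, the Cauchy--Schwarz inequality gives
$$
\sum_{R\in\cH}\abs{T_k(2^{-\ell}\vec h,R;N)}\le t^{1/2}\(\sum_{R\in\cH}\abs{T_k(2^{-\ell}\vec h,R;N)}^2\)^{1/2}=t^{1/2}\,V_k(2^{-\ell}\vec h,\cH;N)^{1/2},
$$
and since the bound of Theorem~\ref{thm:VN} is uniform over non-zero vectors, $V_k(2^{-\ell}\vec h,\cH;N)\ll kN^{4k}p^{1/2}+kN^{2k-1}t$, so that
$$
\sum_{R\in\cH}\abs{T_k(2^{-\ell}\vec h,R;N)}\ll k^{1/2}N^{2k}p^{1/4}t^{1/2}+k^{1/2}N^{k-1/2}t,
$$
uniformly in $\vec h$. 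Combining this with the standard estimate $\sum_{0<\|\vec h\|_\infty\le p}\prod_{j}\max(1,\abs{h_j})^{-1}\ll(c_1\log p)^k$ and with $\sum_{R\in\cH}kN^k/p=tkN^k/p$, one gets
$$
\Delta_{k,\ell}(\cH,N)\ll c_0^k(c_1\log p)^k\(k^{1/2}N^{2k}p^{1/4}t^{1/2}+k^{1/2}N^{k-1/2}t\)+\frac{tkN^k}{p}+N^k.
$$
Choosing the absolute constant $C$ large enough, the factors $c_0^k$, $c_1^k$ and $k^{1/2}$, the term $tkN^k/p$, and the $R=\cO$ contribution $N^k$ are all absorbed into $(C\log p)^k$; indeed, in the range where the claimed right-hand side does not already exceed the trivial bound $\Delta_{k,\ell}(\cH,N)\le tN^k$, these extra pieces are genuinely of lower order (using the Hasse bound $t\le p+2p^{1/2}+1$). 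This yields $\Delta_{k,\ell}(\cH,N)\le\(N^{2k}p^{1/4}t^{1/2}+N^{k-1/2}t\)(C\log p)^k$, as required.

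The substance of the argument is concentrated in the first two paragraphs: the device of multiplying by $2^{-\ell}\bmod p$, which turns a base-$2$ digit condition into membership in a genuine interval of $\Z/p\Z$ and thereby makes the additive characters of $\F_p$ --- hence the sums $T_k$ --- the natural tool; and the observation that Theorem~\ref{thm:VN} applies to the frequency vector $2^{-\ell}\vec h$ precisely because its conclusion is uniform over all non-zero $\vec c$. The analytic input --- the Erd\H{o}s--Tur\'an--Koksma inequality, or, equivalently, the explicit Fourier coefficients of an interval together with a routine completion of the sums --- is classical. The only genuine remaining difficulty is the bookkeeping of the several lower-order error terms and verifying that they disappear into the $(C\log p)^k$ factor.
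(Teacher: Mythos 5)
Your proof is correct and takes essentially the same route as the paper's: the key device of multiplying by $2^{-\ell}\bmod p$ to turn the least-significant-bit condition into an interval condition, the Fourier expansion over $\F_p^k$ (which the paper carries out explicitly via orthogonality and the incomplete geometric-sum bound $\sum_{y=0}^{L}\psi(-cy)\ll p/(1+\min\{c,p-c\})$ rather than citing Erd\H{o}s--Tur\'an--Koksma), and the Cauchy--Schwarz step against Theorem~\ref{thm:VN} exploiting its uniformity in $\vec{c}$ are all identical. The only point deserving a second glance is your claim that $2^{-\ell}\vec{h}$ is non-zero in $\F_p^k$ whenever $\vec{h}\ne\vec{0}$, which fails when every non-zero $h_j$ equals $\pm p$; but those terms have $\prod_j\max(1,|h_j|)\ge p$ and contribute $O(3^kN^k/p)$, so they are absorbed and this is only bookkeeping, not a gap.
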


\begin{proof} Clearly the binary expansion of $x \in \F_p$
ends with an $\ell$-bit string $\sigma$ if and only if
$x = 2^\ell y + \bar{\sigma}$,  where $\bar{\sigma}$
is the integer represented by $\sigma$
and the   integer $y$ is
such that
$0 \le y < (p - \bar{\sigma})/2^\ell$.
Alternatively, denoting by $\lambda \in \F_p$ the reciprocal
of $2^\ell$, we obtain
$$
\lambda(x-\bar{\sigma}) = y .
$$

We now define
$$
L_j = \rf{(p - \bar{\sigma_j})/2^\ell} - 1,
  \qquad j =1, \ldots, k.
$$
We also recall  the identity
$$
\frac{1}{ p}\sum_{c \in \F_p} \psi(cv) =
\left\{\begin{array}{ll}
1,&\quad\text{if $v=0$,}\\
0,&\quad\text{if $v\in \F_p^*$.}
\end{array}
\right.
$$

Therefore, for any fixed  nontrivial additive character $\psi$ of $\F_p$,
we have
\begin{eqnarray*}
\lefteqn{A_{k,\ell}(R, N; \sigma_1,  \ldots, \sigma_k)}\\
 & & \qquad    \quad =
  \sum_{n_1, \ldots, n_k=1}^N
  \sum_{y_1=0}^{L_1} \ldots  \sum_{y_k=0}^{L_k}\\
  & &  \qquad \qquad \qquad \qquad
  \prod_{j =1}^k \frac{1}{p}\sum_{c_j\in \F_p}
  \psi\( c_j\(\lambda x\(\(\prod_{i =1}^j n_i\) R\) - \lambda
\bar{\sigma_j} -y_j\)\)\\
  & &   \qquad  \quad = \sum_{n_1, \ldots, n_k=1}^N  \sum_{y_1=0}^{L_1} \ldots  \sum_{y_k=0}^{L_k}\\
  & &   \qquad \qquad \qquad \frac{1}{p^k} \sum_{\vec{c} \in \F_p^k} \prod_{j =1}^k \psi\( \lambda c_j x\(\(\prod_{i =1}^j n_i\) R\)\)\psi(- \lambda c_j
\bar{\sigma_j})\psi( -c_jy_j)\\
  & &   \qquad \quad = \frac{1}{p^k} \sum_{\vec{c} \in \F_p^k}
T_k(\lambda \vec{c},R;N)
  \psi\(- \lambda\sum_{j =1}^k   c_j \bar{\sigma_j}\)
  \prod_{j =1}^k \sum_{y_j=0}^{L_j}  \psi\(- c_jy_j\),
  \end{eqnarray*}
where the outer summation is taken over all vectors
$\vec{c} = (c_1, \ldots, c_k) \in  \F_p^k$.
Separating the term
\begin{eqnarray*}
\frac{(L_1+1)\ldots (L_k+1) N^k}{p^k}
& = &2^{-k\ell}N^k + O\(k  2^{-(k-1)\ell}N^k p^{-1}\)\\
& = & 2^{-k\ell}N^k + O\(N^k p^{-1}\),
\end{eqnarray*}
corresponding to the zero-vector $\vec{c} = \vec{0}$, we obtain
\begin{eqnarray*}
\lefteqn{\left|A_{k,\ell}(R, N; \sigma_1,  \ldots, \sigma_k)
-  2^{-k\ell}N^k \right| }\\
& &  \qquad \qquad \qquad \ll N^k p^{-1} + \frac{1}{p^k}
\sum_{\substack{\vec{c} \in \F_p^k\\\vec{c} \ne \vec{0}}}
\left|
T_k(\lambda \vec{c},R;N)\right|
\prod_{j =1}^k \left| \sum_{y_j=0}^{L_j}  \psi\(- c_jy_j\)\right|.
\end{eqnarray*}
Furthermore, using that
$$
  \sum_{y=0}^{L}  \psi\(- cy\) \ll  \frac{p}{1+\min\{c, p-c\}},
$$
which holds for $c \in \F_p$ and a positive integer $L$,
see~\cite[Bound~(8.6)]{IwKow}, we derive
\begin{eqnarray*}
\lefteqn{\left|A_{k,\ell}(R, N; \sigma_1,  \ldots, \sigma_k)
-  2^{-k\ell}N^k \right|}\\
  & &  \qquad \quad  \ll N^k p^{-1} +
  \sum_{\substack{\vec{c} \in \F_p^k\\\vec{c} \ne \vec{0}}}
\left|
T_k(\lambda \vec{c},R;N)\right|
  \prod_{j =1}^k \frac{1}{1+\min\{c_j, p-c_j\}}.
  \end{eqnarray*}
Since the right hand side of the last expression
does not depend on $\sigma_1,  \ldots, \sigma_k$, we see that
$$\Delta_{k,\ell}(\cH, N)  \ll  N^k t p^{-1} +
  \sum_{\substack{\vec{c} \in \F_p^k\\\vec{c} \ne \vec{0}}}  \prod_{j
=1}^k \frac{1}{1+\min\{c_j, p-c_j\}} \sum_{R\in \cH}
\left|
T_k(\lambda \vec{c},R;N)\right| .
$$
Finally, using  the Cauchy inequality and then applying Theorem~\ref{thm:VN},
we obtain
\begin{eqnarray*}
\lefteqn{\Delta_{k,\ell}(\cH, N)}\\
  & & \quad  \ll N^k t p^{-1} +
  \sum_{\substack{\vec{c} \in \F_p^k\\\vec{c} \ne \vec{0}}} \sqrt{t V_k(\vec{c},\cH; N)}
 \prod_{j=1}^k \frac{1}{1+\min\{c_j, p-c_j\}} \\
  & & \quad  \ll N^k t p^{-1} +\(N^{2k} p^{1/4}t^{1/2} + k^{1/2}N^{k-1/2} t\)
\prod_{j =1}^k \sum_{c_j \in \F_p} \frac{1}{1+\min\{c_j, p-c_j\}} .
\end{eqnarray*}
We choose $C_0$ such that
$$
\sum_{c \in \F_p} \frac{1}{1+\min\{c, p-c\}} \le C_0 \log p.
$$
Taking $C > C_0$ sufficiently large
(to accommodate in $C^k$ all other constants and also the factor $k^{1/2}$)
we obtain
$$
\Delta_{k,\ell}(\cH, N)\le \(N^{2k} p^{1/4}t^{1/2} +N^k t p^{-1}+ N^{k-1/2} t\) (C \log p)^k.
$$
Furthermore, the condition $\gcd(N!,t)=1$
implies that $N < t = O(p)$
thus $ N^k t p^{-1} \ll N^{k-1/2} t$.
Hence the term $ N^k t p^{-1}$
can be omitted from the above bound,
which concludes the proof.
\end{proof}

We recall that in~\cite{JJV}, it has been suggested to
use the values $N = (\log p)^{O(1)}$.
Since cardinalities of elliptic curves of cryptographic interest are
either prime or contain a very small smooth part (that is, a part
composed out of small primes), it is natural to assume that  the
order $t$ of the largest subgroup   $\cH$  of  $\E(\F_p)$
with $\gcd(N!,t) = 1$ satisfies $t \sim p^{1 + o(1)}$. In fact,
assuming only that $t\ge p^{1/2 + \delta}$ for some
fixed $\delta > 0$, we see that Theorem~\ref{thm:DN}
is nontrivial provided $k \ell = o(\log N)$ and
asserts that for almost all points $R \in \cH$, strings of $\ell$
least significant bits
of the vectors~\eqref{eq:vect} are uniformly distributed.
That is,
for all $2^{k\ell}$ choices
of $k$ bit strings $\sigma_1,  \ldots, \sigma_k$ of length
$\ell$ for almost all points $R \in \cH$, the counting function
$A_{k,\ell}(R, N; \sigma_1,\ldots, \sigma_k)$ is
close to its expected value $2^{-k \ell} N^k$.

\end{document}